\theoremstyle{plain}
\newtheorem{theorem}{Theorem}[section]
\newtheorem{lemma}[theorem]{Lemma}
\newtheorem{corollary}[theorem]{Corollary}
\theoremstyle{definition}
\newtheorem{definition}[theorem]{Definition}
\newtheorem*{remark}{Remark}
\title{Accelerating Convergence in Global Non-Convex Optimization with Reversible Diffusion}
\author{%
  Ryo Fujino\\
  Department of Mathematical Informatics,\\
  Graduate School of Information Science and Technology,\\
  The University of Tokyo, Tokyo, Japan\\
  \texttt{ryo-fujino@g.ecc.u-tokyo.ac.jp} \\
}
\begin{document}

\maketitle

\begin{abstract}
Langevin Dynamics has been extensively employed in global non-convex optimization due to the concentration of its stationary distribution around the global minimum of the potential function at low temperatures. In this paper, we propose to utilize a more comprehensive class of stochastic processes, known as reversible diffusion, and apply the Euler-Maruyama discretization for global non-convex optimization. We design the diffusion coefficient to be larger when distant from the optimum and smaller when near, thus enabling accelerated convergence while regulating discretization error, a strategy inspired by landscape modifications. Our proposed method can also be seen as a time change of Langevin Dynamics, and we prove convergence with respect to KL divergence, investigating the trade-off between convergence speed and discretization error. The efficacy of our proposed method is demonstrated through numerical experiments.
\end{abstract}

\section{Introduction}

The global optimization of non-convex functions is a crucial problem frequently encountered in various domains, particularly machine learning and statistics. However, addressing it in a general context is a daunting task, as standard gradient descent techniques often only yield local optima.

This paper focuses on the unconstrained optimization problem:
\[
\displaystyle \min_{x\in\mathbb{R}^n}F(x)
\]
where we assume that $F: \mathbb{R}^n\to \mathbb{R}$ is twice continuously differentiable, and it is potentially non-convex. Let $x_*$ be the optimal point (solution) of this problem.
Langevin Dynamics has been widely used to tackle this problem:
\[
dX_t = -\nabla F(X_t)dt + \sqrt{2\beta^{-1}}dW_t,\quad \beta>0
\]
where $(W_t)_{t\ge 0}$ denotes Brownian motion and $\beta > 0$ is the inverse temperature parameter. This can be viewed as gradient descent perturbed with Gaussian noise. If we denote the stationary distribution of this stochastic process as $\nu$, it becomes $\nu(x) \propto \exp(-\beta F(x))$ which is called the Gibbs distribution. As $\beta$ approaches infinity (i.e., low temperature limit), this distribution converges towards one that is concentrated around $x_*$, even if $F$ is a non-convex function\cite{Geman,Hwang}. Its convergence in probability through simulated annealing was proved by \cite{Chiang}, and \cite{Kushner} analysed its asymptotic behavior. This property is particularly useful for the global non-convex optimization.

For the purpose of devising optimization algorithms, the Euler-Maruyama discretization of the diffusion has been employed, and its convergence analysis has been extensively researched by \cite{Taiji,Raginsky,Vempala, Xu}. Recently, a more broad class of diffusion process with state-dependent volatility has been suggested for the same optimization objective. It is given by
\[
dX_t = b(X_t)dt + \sqrt{2\beta^{-1}}\sigma(X_t)dW_t,\quad \beta>0 
\]
where $b:\mathbb{R}^n\to\mathbb{R}^n, \sigma:\mathbb{R}^n\to \mathbb{R}^{n\times n}$, and $b,\sigma$ are selected in a way that the diffusion possesses $\nu$ as a stationary distribution\cite{Ma}. 
The convergence analysis and the specific construction of diffusion provided by \cite{Erdogdu} inspires our pursuit to find the appropriate selection of its coefficients, a topic we will explore in the rest of this paper.

\subsection{Our Contributions}
The principal contributions of this paper can be distilled as follows.
We introduce a diffusion process with a non-constant diffusion coefficient for global non-convex optimization. The proposed diffusion is represented by:
\begin{eqnarray}
f_{\lambda, \vartheta}(x) &=& \lambda(1 - e^{-\vartheta x^2}),\nonumber\\
a(x) &=& \sigma(x)^{\top}\sigma(x)
= (f_{\lambda, \vartheta}((F(x) - c)^+)) + 1)I_n,\nonumber\\
b(x) &=& -a(x)\nabla F(x) + \beta^{-1}\nabla\cdot a(x) \nonumber
\end{eqnarray}
where $c > F(x_*), \lambda > 0, \vartheta > 0$ and $I_n$ is an identity matrix. 
Here, we denote the divergence of matrix valued function $a(x)$ as
\[
\nabla \cdot a(x) = \sum_{i=1}^{n}e_i\sum_{j=1}^{n}\frac{\partial a_{ij}(x)}{\partial x_j}
\]
where $e_1,\dots,e_n$ are the standard basis of $\mathbb{R}^n$, and $(\,\cdot\,)^+ = \max\{\,\cdot\,, 0\}$.
In this setting, the diffusion becomes reversible, a term which we will define subsequently. We prove the convergence of the discretized algorithm of this diffusion with respect to the KL divergence, and this result is a generalization of Theorem 1 in \cite{Vempala}. Based on this result, we offer a comprehensive discussion on the trade-off between convergence speed and discretization error.

The construction of this diffusion assumes that we have a prior knowledge of the upper bound for $F(x_*)$  
However, we can adaptively estimate $F(x_*)$ with a current minimum of each iteration and put the estimate into $c$ for practical applications.
We introduce a novel adaptive optimization algorithm (AdaVol) which doesn't need prior knowledge of $F(x_*)$ from the aforementioned diffusion. Through numerical experiments, we illustrate their accelerated convergence in comparison to Langevin Dynamics.

\subsection{Related Works}
Accelerating the convergence of diffusion process towards its statinary distribution is a challenging task that has been tackled by numerous prior works.
Theorem 2.6 in \cite{Wang} and Proposition 3.5 in \cite{Erdogdu} established a criterion for identifying an appropriate diffusion coefficient from the perspective of distant dissipativity, and \cite{Erdogdu} highlighted the effectiveness of non-constant diffusion coefficient through a numerical experiment for a heavy-tailed objective function. 

There are also several prior works that propose concrete methods for constructing a non-constant diffusion coefficient, with many advocating for the amplification of these coefficients when distant from the global optimum. For instance, \cite{Mehl} and \cite{Roberts} employed tempered langevin dynamics, and \cite{Poliannikov} proposed to adjust the diffusion coefficient to be proportional to the inverse of $\|\nabla F(X_t)\|$.
While our focus is on reversible diffusion, the use of non-reversible diffusion for optimization has been advocated in works such as \cite{Ueda,Chii,Nier}. Another related line of research involves controlling temperature. For example, \cite{Gao} introduced a control method from from the perspective of optimal control, and \cite{Ye} proposed continuous tempering for neural networks.

Moreover, Riemannian Langevin Dynamics has been proposed in \cite{Abbati,Xiao,Kelvin}. Their motivation is to utilize an appropriate Riemannian geometry to enhance the convergence of Langevin Dynamics, which subsequently results in a non-constant diffusion coefficient in its implementations. Convergence analyses for these algorithms are also detailed in \cite{Gatmiry,Xiao,Kelvin}, and their approaches have provided valuable insights for our work.

Finally, we introduce two methodologies that closely align with our approach.
Our method draws inspiration from landscape modifications, a concept first introduced in \cite{Fang}. Landscape modifications employ a modified diffusion:
\[
dX_t = -\nabla F(X_t)dt + \sqrt{2\beta^{-1}(f((F(X_t) - c)^+)) + \varepsilon)}\,dW_t,\quad\varepsilon>0
\]
where $f$ is a bounded and twice continuously differentiable function that satisfies $f(0)=f'(0)=f''(0) = 0$. Its stationary distribution is given by
\[
\hat{\nu}(x) \propto \frac{1}{f((F(x) - c)^+)) + 1}
\exp\left\{-\int_{F(x_*)}^{F(x)}\frac{2\beta}{f((F(\xi) - c)^+)) + 1}\,d\xi\right\}
\]
and although this deviates from $\nu$, it retains the same concentration property\cite{Fang}. Given that the diffusion coefficient is larger when far from the optimum, this diffusion modifies the objective function to exhibit a lower energy barrier \cite{choi2,Choi,Fang}. Thanks to this modification, the convergence is accelerated and thus leads to an effective optimization algorithm under low temperature. 

Another closely related approach is outlined in \cite{Engquist}, which employs a diffusion without a drift coefficient:
\[
dX_t = \sqrt{2((F(X_t) - F(x_*))^+))^\gamma + \varepsilon}\,dW_t,\quad \gamma \ge \frac{n}{2},\quad
\varepsilon>0.
\]
This method utilizes a diffusion coefficient similar to that in our proposed method, but surprisingly, it doesn't require information about the derivative $\nabla F(X_t)$. The effectiveness of this functional form for global non-convex optimization supports our choice of diffusion coefficient.

\section{Preliminaries}
Before presenting our main results, we will introduce some preliminary theories and provide definitions for the necessary terms.

\subsection{Reversible Diffusion}

We consider an $n$-dimensional diffusion process $(X_t)_{t\ge 0}$ defined by the stochastic differential equation
\[
dX_t = b(X_t)dt + \sqrt{2\beta^{-1}}\sigma(X_t)dW_t,\quad \beta>0    
\]
where $b:\mathbb{R}^n\to\mathbb{R}^n, \sigma:\mathbb{R}^n\to \mathbb{R}^{n\times n}$ and $(W_t)_{t\ge 0}$ denotes an $n$-dimensional Brownian motion. The generator $\mathcal{L}$ of the diffusion is defined as
\[
\mathcal{L}g(x) := \lim_{t\to 0}\frac{(P_tg)(x)-g(x)}{t} = b(x)\cdot\nabla g(x) + \frac{1}{\beta}\mathrm{Tr}(a(x)\nabla^2 g(x))
\]
where $(P_tg)(x) = \mathbb{E}(g(X_t)|X_0=x), a(x) = \sigma(x)^{\top}\sigma(x)$. The adjoint operator of $\mathcal{L}$, also known as the Fokker-Planck operator, is defined as
\[
\mathcal{L}^*g(x) := \nabla\cdot\left(-b(x)\cdot g(x) + \frac{1}{\beta}\nabla\cdot(a(x)g(x))\right).
\]
We assume that the transition density function $\rho(t, x)$ satisfies the forward Kolmogorov equation:
\[
\frac{\partial}{\partial t}\rho(t, x) = \mathcal{L^*}\rho(t, x)
\]
and that the stationary density function $\nu(x)$ satisfies the stationary Fokker-Planck equation:
\[
\mathcal{L^*}\nu(x) = 0
\]
which is a second order linear partial differential equation.

The class of diffusion processes, which forms the main focus of this paper, is defined as follows

\begin{definition}[Reversible Diffusion]
A stationary diffusion process $(X_t)_{t\ge 0}$ with stationary distribution $\nu$ is said to be reversible if it satisfies the detailed balance condition:
\[
b(x)\nu(x) - \frac{1}{\beta}\nabla\cdot(a(x)\nu(x)) = 0,\quad x\in\mathbb{R}^n.
\]
\end{definition}

\begin{remark}
While it is not intuitive to call this as "reversible", this condition is equivalent to the condition that 
$(X_t)_{t\ge 0}$ is stationary and for every $T > 0$, $X_t$ and $X_{T-t}$ have 
the same distribution\cite{Qian}. 
\end{remark}

Although solving the stationary Fokker-Planck equation for given $b$ and $\sigma$ is challenging, we can readily find a reversible diffusion for an arbitrary stationary distribution \citep{Erdogdu, Ma, Pavliotis}. Specifically, if we let
\[
b(x) = -a(x)\nabla F(x) + \frac{1}{\beta}\nabla\cdot a(x) 
\]
then $(X_t)_{t\ge 0}$ is a reversible diffusion with a stationary distribution of $\nu(x) \propto \exp(-\beta F(x))$. The case of $a(x) = I_n$ corresponds to Langevin Dynamics. There is a degree of freedom in the selection of $a(x)$ and this flexibility enables us to find a diffusion coefficient that is more suited for non-convex optimization than a constant one.

\subsection{Log-Sobolev Inequality}

Let $\rho$ and $\mu$ be probability distributions on $\mathbb{R}^n$ with density functions denoted as $\rho(x)$ and $\nu(x)$, respectively. The KL divergence (Kullback-Leibler divergence, relative entropy) of $\rho$ with respect to $\nu$ is defined as
\[
H_{\nu}(\rho) = \int_{\mathbb{R}^n}\rho(x)\log{\frac{\rho(x)}{\nu(x)}}\,dx.
\]
We will use the KL divergence to measure the distance between probability distributions, and discuss their convergence.
The relative Fisher information of $\rho$ with respect to $\nu$ is defined as
\[
J_{\nu}(\rho) = \int_{\mathbb{R}^n}\rho(x)\left\|\nabla\log{\frac{\rho(x)}{\nu(x)}}\right\|^2dx.
\]

If we denote the probability distribution of $X_t$ as $\rho_t$ and assume the setting of Langevin Dynamics ($a(x) = I_n$), the entropy decreases along the flow:
\[
\frac{d}{dt}H_{\nu}(\rho_t) = -\frac{1}{\beta}J_{\nu}(\rho_t)
\]
and this equation is called the de Bruijin's identity\cite{Bakry}.

\begin{definition}[Log-Sobolev Inequality, LSI]
A distribution $\nu$ on $\mathbb{R}^n$ is said to satisfy the log-Sobolev inequality with a constant $\alpha > 0$ if for all distributions on $\mathbb{R}^n$:
\[
H_{\nu}(\rho)\le \frac{1}{2\alpha} J_{\nu}(\rho).
\]
\end{definition}

It is worth noting that the LSI is applicable for a Gibbs distribution with non-convex potentials, and the inequality is preserved under bounded perturbation\cite{Bakry,Vempala}. Under the LSI, the de Bruijin's identity leads to an exponential convergence of KL divergence:
\[
H_{\nu}(\rho_t) \le e^{-2\alpha\beta^{-1} t}H_{\nu}(\rho_0).
\]
This is the fundamental idea of \cite{Vempala} and our main analysis.

Finally, we define a smoothness condition as follows. 
\begin{definition}[$L$-smoothness]
We say a function $F: \mathbb{R}^n\to \mathbb{R}$ is $L$-smooth if there exists a constant $L > 0$ which satisfies $-LI_n \preceq \nabla^2 F(x) \preceq LI_n$ for all $x\in \mathbb{R}^n$.
\end{definition}

\section{Main Results}
\label{Main Results}
In this section, we introduce our main theory and proposed algorithm.
We propose a reversible diffusion process with
\[
a(x) = (f_{\lambda, \vartheta}((F(x) - c)^+)) + 1)I_n ,\quad f_{\lambda, \vartheta}(x) = \lambda(1 - e^{-\vartheta x^2}) 
\]
where $\displaystyle c > F(x_*), \lambda > 0, \vartheta > 0$. $f_{\lambda, \vartheta}$ is an activation function, and the case of either $\lambda = 0$ or $\vartheta = 0$ corresponds to the langevin dynamics. 
As discussed in the previous section, the drift coefficient is determined by the equation $b(x) = -a(x)\nabla F(x) + \beta^{-1}\nabla\cdot a(x)$.

In this method, we rely on some prior knowledge of $F(x_*)$, and the diffusion coefficient increases when it is far from the global optimum.  While this may seem restrictive, it is worth noting that many practical optimization problems have such prior knowledge (e.g., least squares). Furthermore, even in cases where prior knowledge is unavailable, we can update the parameter $c$ adaptively during the optimization process, making our method applicable to a wider range of problems. We will introduce the adaptive algorithm in \hyperref[3.2]{3.2}.

\subsection{Theory}

In order to implement a discrete-time algorithm, we employ the Euler-Maruyama method:
\[
\hat{X}_{k+1} = \hat{X}_k -\eta(a(\hat{X}_k)\nabla F(\hat{X}_k) - \beta^{-1}\nabla\cdot a(\hat{X}_k))
+ \sqrt{2\beta^{-1}\eta a(\hat{X}_k)}Z_k,\quad k=0,1,\dots
\]
where $Z_k\sim\mathcal{N}(0, I_n)$ is a Gaussian random variable and it is independent from $\hat{X}_k$, and let $\rho_k$ be the probability distribution of $\hat{X}_{k}$.

For the analysis of the Euler-Maruyama method, we consider a stochastic differential equation
\[
dX_t^k = -(a(X_0^k)\nabla F(X_0^k) - \beta^{-1}\nabla\cdot a(X_0^k))dt + \sqrt{2\beta^{-1}a(X_0^k)}dW_t,
\ X_0^k\sim \rho_k,\ t\in[0, \eta]
\]
and let $\rho_t^k$ denote the probability distribution of $X_{t}$. By definition, 
$\rho_0^k = \rho_k, \rho_\eta^k=\rho_{k+1}$ and $X_0^k = \hat{X}_k$. The solution of this stochastic differential equation aligns to the above Euler-Maruyama method, and this leads to $X_\eta^k = \hat{X}_{k+1}$.

For $k = 0,1,\dots$, we define
\begin{eqnarray*}
\gamma_k &=&
\sup\left\{\gamma\in\mathbb{R};
\forall t\in[0,\eta], \gamma J_{\nu}(\rho_t^k)\le
\mathbb{E}\left[a(X_0^k)\left\|\nabla\log{\frac{\rho_t(X_{t}^k)}{\nu(X_{t}^k)}}\right\|^2\right]
\right\} \\
\delta_k &=& \mathbb{E}\left[a(X_0^k)^2\right].
\end{eqnarray*}
Given that $f_{\lambda,\vartheta}(x) \ge 0$, it follows by definition that $\delta_k \ge 1$ and $\gamma_k \ge 1$.
By using $\gamma_k$ and $\delta_k$, we obtain the following theorem.

\begin{theorem}
\label{thm}
Suppose $\nu$ satisfies LSI with constant $\alpha > 0$ and $F$ is $L$-smooth. If $k\ge 2$ and 
\[
0< \eta \le \frac{\delta_k}{(\lambda + 2)^2L}
\vee \frac{\alpha\sqrt{\gamma_k}}{4\beta(\lambda+2)^{3/2} L^2},\quad \vartheta\le\beta^2
\]
then an inequality  
\begin{equation}
H_{\nu}(\rho_{k+1})\le e^{-\frac{\alpha\gamma_k\eta}{\beta}}H_{\nu}(\rho_k)
+ 8\delta_k\eta^2 nL^2.
\end{equation}
holds.
\end{theorem}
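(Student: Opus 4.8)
The plan is to mirror the continuous-time entropy dissipation argument of \cite{Vempala} (the de Bruijin identity combined with the LSI), but applied to the frozen-coefficient interpolation $(X_t^k)_{t\in[0,\eta]}$ rather than to the exact diffusion, and to carefully track the extra error created by freezing the \emph{state-dependent} coefficient $a$. Concretely, I would differentiate $H_{\nu}(\rho_t^k)$ in time, produce a differential inequality of the shape
\[
\frac{d}{dt}H_{\nu}(\rho_t^k)\le -\frac{\gamma_k}{\beta}\,\alpha\, H_{\nu}(\rho_t^k) + (\text{discretization error})(t),
\]
and then integrate over $[0,\eta]$; since $\int_0^\eta e^{-c(\eta-t)}E(t)\,dt\le \int_0^\eta E(t)\,dt$, the exponential factor $e^{-\alpha\gamma_k\eta/\beta}$ multiplies $H_\nu(\rho_k)$ and the integrated error produces the additive $8\delta_k\eta^2 nL^2$ term.

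The first concrete step is to derive the Fokker--Planck equation for the marginal law $\rho_t^k$. Conditioning on $X_0^k=x_0$, the interpolation has frozen drift $-v_0:=-(a(x_0)\nabla F(x_0)-\beta^{-1}\nabla\cdot a(x_0))$ and frozen diffusion $2\beta^{-1}a(x_0)$, so its transition density is Gaussian and satisfies a constant-coefficient Fokker--Planck equation in $x$. Integrating against $\rho_k(x_0)$ gives
\[
\frac{\partial \rho_t^k}{\partial t}(x)=\nabla\!\cdot\!\big(\rho_t^k(x)\,\mathbb{E}[v_0\mid X_t^k=x]\big)+\frac{1}{\beta}\Delta\big(\rho_t^k(x)\,\mathbb{E}[a(X_0^k)\mid X_t^k=x]\big),
\]
where the coefficients are now conditional expectations given $X_t^k=x$. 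I would then compute $\frac{d}{dt}H_\nu(\rho_t^k)=\int \partial_t\rho_t^k\,\log(\rho_t^k/\nu)\,dx$, integrate by parts in $x$, and use $\nabla\log\nu=-\beta\nabla F$ to split the result. The reversible choice of $b$ is exactly what makes the two $\nabla F$-type contributions cancel up to the discretization gap: the main surviving term is $-\beta^{-1}\,\mathbb{E}\big[a(X_0^k)\,\|\nabla\log(\rho_t^k/\nu)(X_t^k)\|^2\big]$, which by the very definition of $\gamma_k$ is bounded above by $-\beta^{-1}\gamma_k J_\nu(\rho_t^k)$, and then by the LSI by $-\beta^{-1}\gamma_k\cdot 2\alpha H_\nu(\rho_t^k)$.

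The remaining terms are two error contributions. The first, $E_1$, comes from freezing the gradient and has the form $-\mathbb{E}\big[a(X_0^k)\big(\nabla F(X_0^k)-\nabla F(X_t^k)\big)\cdot \nabla\log(\rho_t^k/\nu)(X_t^k)\big]$; here I would apply Young's inequality to absorb a fixed fraction (say one half) of the dissipation term, leaving a remainder bounded via $L$-smoothness by $\tfrac{\beta L^2}{2}\mathbb{E}[a(X_0^k)\|X_t^k-X_0^k\|^2]$. From the conditionally Gaussian increment one gets $\mathbb{E}[a(X_0^k)\|X_t^k-X_0^k\|^2]=t^2\,\mathbb{E}[a(X_0^k)\|v_0\|^2]+2\beta^{-1}tn\,\mathbb{E}[a(X_0^k)^2]$; the diffusion part contributes $\mathbb{E}[a(X_0^k)^2]=\delta_k$, and after integration in $t$ this is the source of the $\delta_k\eta^2 nL^2$ term (the factor $n$ coming from $\mathbb{E}\|W_t\|^2=nt$). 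The drift part is cubic in $\eta$ and is controlled by the upper bound on $\eta$ together with the bound $\|v_0\|\lesssim(\lambda+2)\|\nabla F\|$ and an estimate of $\mathbb{E}_{\rho_k}\|\nabla F\|^2$ (this is where $k\ge 2$ is needed, to guarantee enough regularity/moment control of $\rho_k$), which explains the $(\lambda+2)$ factors appearing in the step-size condition.

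I expect the \emph{hard part} to be the second error term, $E_2$, which has no analogue in the constant-coefficient setting of \cite{Vempala}: it arises because freezing the diffusion coefficient makes the $\beta^{-1}\nabla\cdot a$ drift correction fail to cancel the diffusion term. Precisely, $E_2$ is proportional to $\int \rho_t^k\big(\mathbb{E}[\nabla\cdot a(X_0^k)\mid X_t^k=x]-\nabla_x\mathbb{E}[a(X_0^k)\mid X_t^k=x]\big)\cdot\nabla\log(\rho_t^k/\nu)\,dx$, and the obstruction is that the conditional expectation of the gradient is \emph{not} the gradient of the conditional expectation, so $E_2$ does not reduce to a clean $L$-smoothness bound. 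Controlling it is where the explicit form $f_{\lambda,\vartheta}(x)=\lambda(1-e^{-\vartheta x^2})$ and the hypotheses are used: the bounds $1\le a\le \lambda+1$ and $\|\nabla\cdot a\|\le \lambda\sqrt{2\vartheta}\,\|\nabla F\|$ (the derivative of $f_{\lambda,\vartheta}$ being maximized at size $\lambda\sqrt{2\vartheta}e^{-1/2}$), combined with $\vartheta\le\beta^2$ so that $\beta^{-1}\|\nabla\cdot a\|\lesssim \lambda\|\nabla F\|$ stays comparable to the principal drift $a\nabla F$. One then bounds $E_2$ by exploiting the Gaussian conditional law to estimate the mismatch, and absorbs the result into the reserved fraction of the dissipation via Young's inequality, folding its residual into the same $\delta_k\eta^2 nL^2$ budget. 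Assembling the main term, $E_1$, and $E_2$ yields the differential inequality, and integrating on $[0,\eta]$ gives the stated bound.
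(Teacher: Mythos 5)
Your proposal follows the same route as the paper's proof: the frozen-coefficient interpolation $(X_t^k)_{t\in[0,\eta]}$, a Fokker--Planck equation for the marginal law with conditional-expectation coefficients, differentiation of $H_\nu(\rho_t^k)$, a split into the dissipation term (lower-bounded through the definition of $\gamma_k$ and then the LSI) and a gradient-freezing error $E_1$ treated by Young's inequality, $L$-smoothness, the conditionally Gaussian increment, and a Talagrand-type bound on $\mathbb{E}[a^2\|\nabla F\|^2]$ (these are exactly the paper's Lemmas A.1--A.3), followed by an integrating-factor step in which the cubic-in-$\eta$ error proportional to $H_\nu(\rho_k)$ is absorbed using the step-size hypothesis. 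All of this matches the paper up to cosmetic differences (you reserve a fraction $1/2$ of the dissipation where the paper reserves $1/4$; your guess about the role of $k\ge 2$ is speculative, but the paper never visibly uses that hypothesis either).

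The genuine gap is your term $E_2$, and it is worth understanding how it compares with the paper. You are right that the exact marginal Fokker--Planck equation contains the extra flux $\beta^{-1}\nabla\cdot\big(\rho_t^k(\nabla_x\mathbb{E}[a(X_0^k)\mid X_t^k=x]-\mathbb{E}[(\nabla\cdot a)(X_0^k)\mid X_t^k=x])\big)$, which vanishes only if the gradient commutes with the conditional expectation. But your plan for bounding it never becomes an estimate: the bounds $1\le a\le\lambda+1$ and $\|\nabla\cdot a\|\le\lambda\sqrt{2\vartheta}e^{-1/2}\|\nabla F\|$ control the second member of the mismatch, not the first, which is a derivative of a conditional density and is not pointwise controlled by $\|\nabla F\|$ or by $\|X_t^k-X_0^k\|$; a direct computation with the Gaussian kernel shows that its leading contribution has zero conditional mean but order-one (not $O(t)$) fluctuations, so it is far from clear that Young's inequality against the reserved dissipation closes within the $8\delta_k\eta^2 nL^2$ budget. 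For comparison, the paper's own proof never confronts this term: its equation (2) omits the frozen $\beta^{-1}\nabla\cdot a$ drift, and its equation (3) moves $\mathbb{E}[a(X_0^k)\mid X_t^k=x]$ outside the gradient, which together amounts to dropping exactly your $E_2$ without justification. So your proposal is more careful than the paper in identifying where the state-dependent diffusion coefficient genuinely complicates the argument, but as a proof it is incomplete at precisely the step the paper silently elides; supplying a real bound for that term (or showing it is negligible) is the missing mathematics in both arguments.
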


\begin{proof}
The proof of this theorem is postponed to the supplementary material.
\end{proof}

\begin{remark}
Since we consider large $\beta$, the fundamental condition for $\eta$ will be $\displaystyle \eta
\le \frac{\alpha\sqrt{\gamma_k}}{4\beta(\lambda+2)^{3/2} L^2}$. In addition, $\lambda = 0$ reverts to the result of \cite{Vempala}. When $\vartheta=\infty$, $a(x)$ becomes $\lambda I_n$, and $\gamma_k = \lambda, \delta_k = \lambda^2$. In this case, it is equivalent to scaling $\eta$ by a factor of $\lambda$.
\end{remark}

In this work, an explicit evaluation of $\gamma_k$ and $\delta_k$ is not provided, and addressing this remains a challenge for future work. In this theorem, we assumed $\eta$ to be bounded by $\lambda^{-3/2}$, but our numerical experiments in Section \hyperref[Numerical Experiments]{4} suggest that this assumption is overly restrictive. This arises from the use of the uniform bound of $f_{\lambda,\vartheta}$ in the theorem's construction, leading to an evaluation that is not as precise as it could be. This issue is closely tied to the evaluation of $\gamma_k$ and $\delta_k$, and therefore, addressing it will be a central focus of future work.

By iterating these results for every $k$, we obtain the following corollary.

\begin{corollary}
Suppose $\nu$ satisfies LSI with constant $\alpha > 0$ and $F$ is $L$-smooth. If $k\ge 2$ and 
\[
0< \eta \le \frac{1}{(\lambda + 2)^2L}
\vee \frac{\alpha}{4\beta(\lambda+2)^{3/2} L^2},\quad \vartheta\le\beta^2
\]
then an inequality
\[
H_{\nu}(\rho_k)\le e^{-\frac{\alpha\eta c_k}{\beta}}H_{\nu}(\rho_0)
+ 8\eta^2 n L^2d_k
\]
holds, where
\[
c_k = \sum_{j=0}^{k-1}\gamma_j,\quad d_k = \sum_{j=0}^{k-2}e^{-\alpha\beta^{-1}\eta(\gamma_{j+1}+\cdots+\gamma_{k-1})}\delta_j
+\delta_{k-1}.
\]
\end{corollary}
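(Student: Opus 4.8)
The plan is to read the corollary as nothing more than an unrolling of the one-step estimate furnished by Theorem~\ref{thm}. Writing $H_j:=H_\nu(\rho_j)$, that estimate is
\[
H_{j+1}\le e^{-\alpha\gamma_j\eta/\beta}\,H_j + 8\delta_j\eta^2 nL^2 .
\]
First I would verify that the uniform step-size bound assumed in the corollary implies, at every index $j$, the $j$-dependent bound required by the theorem. Since the excerpt already records $\gamma_j\ge 1$ and $\delta_j\ge 1$, each of the two quantities $\frac{1}{(\lambda+2)^2L}$ and $\frac{\alpha}{4\beta(\lambda+2)^{3/2}L^2}$ is dominated by its counterpart $\frac{\delta_j}{(\lambda+2)^2L}$ and $\frac{\alpha\sqrt{\gamma_j}}{4\beta(\lambda+2)^{3/2}L^2}$; because combining two bounds through $\vee$ is monotone in each argument, the corollary's bound on $\eta$ never exceeds the theorem's, so the one-step inequality is licensed at each $j$ (with $\vartheta\le\beta^2$ carried through unchanged).

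Next I would iterate this recursion, for instance by induction on $k$, to obtain the standard closed form for a linear recurrence with multiplicative decay and additive error,
\[
H_k\le\Big(\prod_{j=0}^{k-1}e^{-\alpha\gamma_j\eta/\beta}\Big)H_0
+\sum_{j=0}^{k-1}\Big(\prod_{i=j+1}^{k-1}e^{-\alpha\gamma_i\eta/\beta}\Big)8\delta_j\eta^2 nL^2,
\]
where empty products are understood to equal $1$. Collapsing the products of exponentials into exponentials of sums converts the coefficient of $H_0$ into $e^{-\alpha\eta c_k/\beta}$ with $c_k=\sum_{j=0}^{k-1}\gamma_j$, and rewrites the factor attached to $\delta_j$ as $e^{-\alpha\beta^{-1}\eta(\gamma_{j+1}+\cdots+\gamma_{k-1})}$. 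Pulling the common constant $8\eta^2nL^2$ to the front then leaves the additive term as $8\eta^2nL^2\sum_{j=0}^{k-1}e^{-\alpha\beta^{-1}\eta(\gamma_{j+1}+\cdots+\gamma_{k-1})}\delta_j$.

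To recognise the stated $d_k$ I would split off the $j=k-1$ summand, where the exponent sum $\gamma_{j+1}+\cdots+\gamma_{k-1}$ is empty; there the exponential factor equals $1$ and the term contributes exactly $\delta_{k-1}$, while the indices $j=0,\dots,k-2$ reproduce $\sum_{j=0}^{k-2}e^{-\alpha\beta^{-1}\eta(\gamma_{j+1}+\cdots+\gamma_{k-1})}\delta_j$. Their sum is precisely $d_k$, and the corollary follows.

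The mathematical substance here is slight --- it is the familiar estimate for a linear recurrence combining geometric decay with accumulated discretization error --- so the genuine work is bookkeeping. The step I expect to demand the most care is reconciling the index conventions: the theorem is stated only for $k\ge 2$, so I must make sure the one-step inequality is actually available at every index $j$ entering the telescoping (and handle the $j=0,1$ boundary cases consistently with that hypothesis), while keeping the empty-product and empty-sum conventions straight. In particular I would double-check that the asymmetric placement of the final $\delta_{k-1}$ outside the sum in the definition of $d_k$ corresponds exactly to the $j=k-1$ boundary term, rather than being swept into the running sum.
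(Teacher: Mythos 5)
Your proposal is correct and is essentially the paper's own argument: the paper gives no separate proof of the corollary beyond the phrase ``by iterating these results for every $k$,'' and your unrolling of the one-step recursion, collapsing the exponential products into $c_k$ and splitting off the $j=k-1$ term to recover $d_k$, is exactly that iteration made explicit. Your observation that $\gamma_j,\delta_j\ge 1$ makes the corollary's uniform step-size bound imply the theorem's $j$-dependent one is the right (and only) point needing verification; the $j=0,1$ boundary subtlety you flag is glossed over by the paper itself, so you are, if anything, more careful than the source.
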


When $\rho_k$ is far from $\nu$, both $\gamma_k$ and $\delta_k$ tend to become large. The parameter 
$c_k$ contributes to faster convergence, while $d_k$ exacerbates the discretization error.
However, these effects are not equivalent. This is because the term $\displaystyle e^{-\alpha\beta^{-1}\eta(\gamma_{k-1}+\cdots+\gamma_{j+1})}$ multiplies each $\delta_j$ and effectively suppress the contribution of early iterations to the discretization error. Moreover, when $\rho_k$ is sufficiently near $\nu$, our proposed method aligns with the corresponding Langevin Dynamics. Consequently, for sufficiently large $k$, we can deduce that $\gamma_k = \delta_k = 1$. This implies that the discretization error does not significantly increase compared to the corresponding Langevin Dynamics if we run large iterations.

When $\beta$ is sufficiently large, the $\beta^{-1}\nabla\cdot a(\hat{X}_k)$ term in the drift coefficient becomes negligible compared to the $a(\hat{X}_k)\nabla F(\hat{X}_k)$ term. This can effectively be interpreted as an adaptive step size version of the corresponding Langevin Dynamics:
\[
\eta \to \eta (f_{\lambda, \vartheta}((F(\hat{X}_k) - c)^+)) + 1).
\]
Therefore, our proposed method is actually changing the time scale of the diffusion. When it is far from the global optimum, the step size becomes large, effectively "accelerating the clock speed". When it is near the global optimum, the step size becomes the same as the corresponding Langevin Dynamics, thereby "recovering the regular clock speed". Thanks to the time change, the diffusion can travel for a long time. This is fundamentally important, as the core idea behind Langevin Dynamics based algorithms is to sample from their stationary distribution. From this perspective, the $\beta^{-1}\nabla\cdot a(\hat{X}_k)$ term can be viewed as a correction term for the implementation of the adaptive step.

Our strategy is to set the parameter $\lambda$ proportional to the inverse temperature $\beta$. For instance, if we assign $\lambda = \beta$, the value of $f_{\lambda,\vartheta}$ closely approximates $\beta$, and the diffusion essentially mimics the environment of Langevin Dynamics with $\beta = 1$ when distant from the optimum.

\subsection{Proposed Algorithm}
\label{3.2}
In order to introduce the adaptive algorithm, we define $h:\mathbb{R}^d\times\mathbb{R}\to \mathbb{R}$ as 
\[
h(x, y) = (f_{\lambda, \vartheta}((F(x) - y)^+)) + 1)
\]
and the proposed algorithm is shown below. We execute $M$ samples concurrently, estimating $c$ using the minimum of the current values of the objective function in each iteration. Similar adaptive estimation of the threshold was employed in \cite{Engquist}.

It is important to note that if the initial distribution is a delta distribution on a single local optimum, the diffusion is likely to be trapped in the local optimum. This is because the estimation of $c$ would be equivalent to the value of the local optimum, leading to a small diffusion coefficient which is equivalent to the corresponding Langevin Dynamics.

\begin{algorithm}[H]
\label{Algo}
\caption{Adaptive Volatility Optimization (AdaVol)}
\begin{algorithmic}[1]
    \Require initial distribution $\rho_0$, inverse temperature $\beta$, 
    step size $\eta$, parameters $\lambda , \vartheta$, sample size $M$
    \For{$i\in \{1,\dots, M\}$}
    \State randomly draw $x_0^i\sim \rho_0$
    \EndFor
    \State $c_0 = \min\{F(x_0^1),\dots,F(x_0^M)\}$
    \For{$j\in \{1,\dots, N\}$}
    \For{$i\in \{1,\dots, M\}$}
    \State randomly draw $z_j^{i}\sim \mathcal{N}(0, I_n)$
    \State $\displaystyle x_{j+1}^i = x_{j}^i +\eta\left(h(x_j^i, c_j)\nabla F(x_j^i) - \beta^{-1}\frac{\partial h}{\partial x}(x_j^i, c_j)\right) 
    + \sqrt{2\beta^{-1}\eta h(x_{j}^i, c_j)}z_j^{i}$
    \EndFor
    \State $c_{j+1} = \min\{c_{j}, F(x_{j+1}^1),\dots,F(x_{j+1}^M)\}$
    \EndFor
    \Ensure $\displaystyle\hat{x} = \frac{1}{M}\sum_{i=1}^{M}x_{N}^i$
\end{algorithmic}
\end{algorithm}

Although the gradient of $h$ needs to be calculated at each iterations, it is given by
\[
\frac{\partial h}{\partial x}(x_j^i, c_j)
= -2\lambda\vartheta(F(x_j^i) - c_j)^+ e^{-\vartheta((F(x_j^i) - c_j)^+)^2}\nabla F(x_j^i)
\]
and the only necessary gradient is $\nabla F(x_j^i)$. Consequently, AdaVol does not require additional gradient calculation costs compared to the Langevin Dynamics.

In addition, as we relies on the empirical mean for the estimation of $x_*$, this can lead to a biased estimation when $\beta$ is not sufficiently large and the objective function is asymmetric. Utilizing a mode estimator could be a more effective alternative, but this consideration lies beyond the scope of the current work.

As a final remark, a discrepancy exists between the setting of Theorem \hyperref[thm]{3.1} and that of AdaVol, given that the coefficients in AdaVol are path-dependent. 
However, our numerical experiments in Section \hyperref[Numerical Experiments]{4} suggest that AdaVol exhibits almost the same convergence property as that of fixed $c$ versions.

\section{Numerical Experiments}
\label{Numerical Experiments}

In this section, we illustrate the effectiveness of our proposed method (AdaVol) through numerical experiments. We employ the Rastrigin function $F:\mathbb{R}^n\to\mathbb{R}$;
\[
F(x) = 5n + \sum_{k=1}^{n}(x_k - 2)^2 - 5\sum_{k=1}^{n}\cos(2\pi(x_k - 2))  
\]
as a test function. This function is highly multimodal, and standard gradient descent algorithms often fail to reach its global optimum, and it is also used in \cite{Engquist2,Engquist}. Also, this function satisfies the assumption of Theorem \hyperref[thm]{1}, and it has a unique global optimum of $0$ for 
$x_* = (2,\dots,2)^\top\in \mathbb{R}^n$. 

For AdaVol, we set the parameters as follows: step size $\eta = 10^{-5}$, inverse temperature $\beta = 10^4$, sample size $M = 10^2$ and $\lambda = 10^4,\vartheta = 1$ for the activation function. 
The initial distribution is set to be $\mathcal{N}(10^3, 10I_2)$. Note that we set the parameter $\lambda$ equal to $\beta$. This choice of $\lambda$ ensures that the diffusion coefficient becomes as large as that of $\beta = 1$ when far from the global optimum.

In Figure \hyperref[1]{1}, we contrast AdaVol with a Langevin Dynamics based algorithm ($\vartheta = 0$ and $\beta = 1, 10^4$). The figure demonstrates that AdaVol matches the speed of Langevin Dynamics with $\beta = 1$ while also attaining a lower objective function value. Conversely, Langevin Dynamics exhibits slow convergence when $\beta = 10^4$.

\begin{figure}[H]
\label{1}
  \centering
  \includegraphics[width=5.25cm]{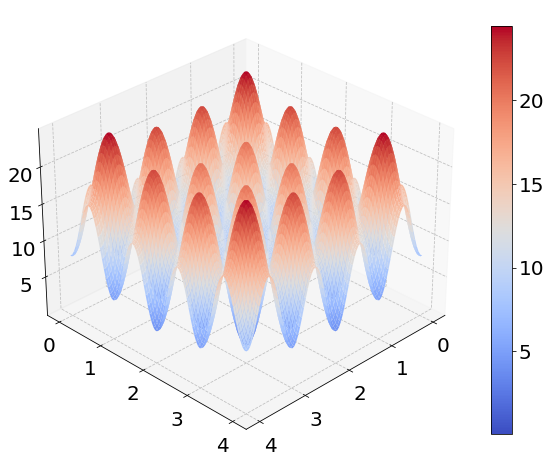}
  \includegraphics[width=8.5cm]{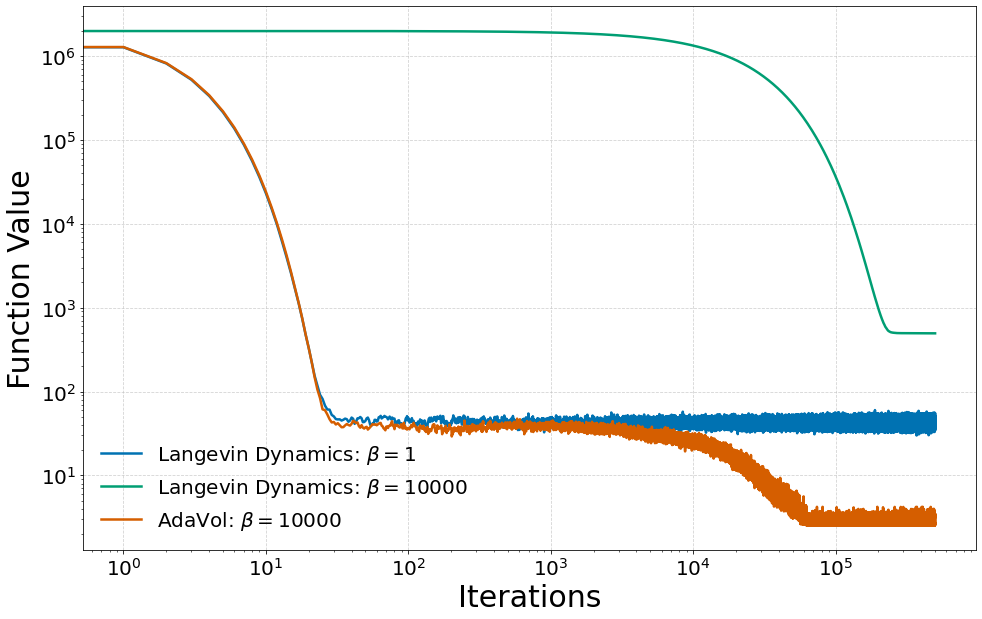}
  \caption{The left plot shows the landscape of Rastrigin function. The right plot compares the proposed method (AdaVol) with existing methods (Langevin Dynamics).}
\end{figure}

Next, we investigate the impact of prior knowledge concerning $F(x_*)$. Figure \hyperref[2]{2} shows that with such a prior knowledge, we can achieve improved convergence. Additionally, it's important to note that while setting a larger $c$ results in quicker convergence, this choice makes it more challenging to distinguish between local optima near the global optimum when the objective value falls below $c$.

\begin{figure}[H]
\label{2}
  \centering
  \includegraphics[width=8.5cm]{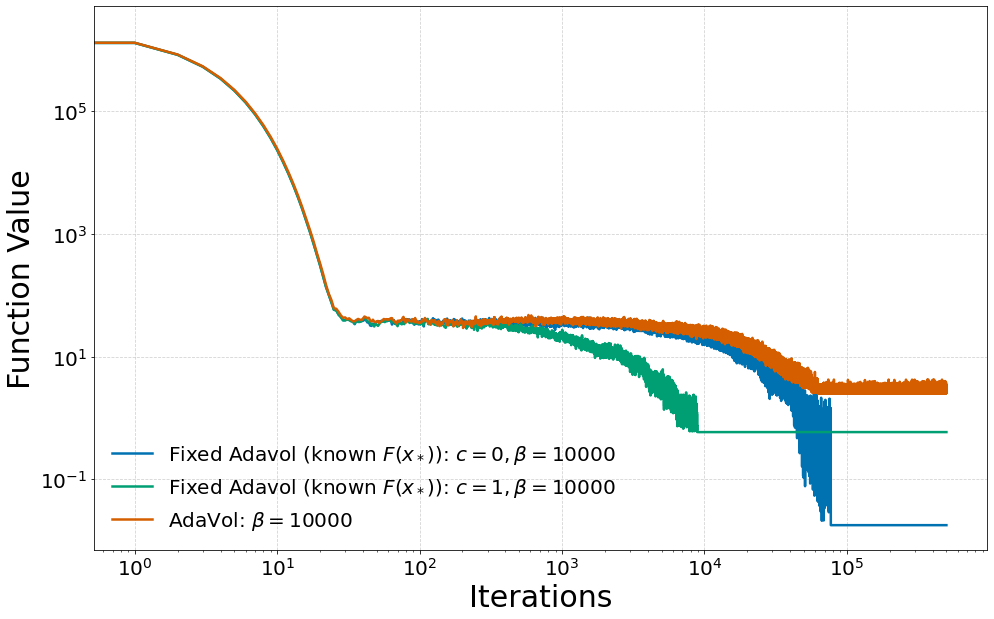}
  \caption{The plot compares fixed and adaptive $c$ scenarios.}
\end{figure}

Our numerical experiments demonstrate the superiority of AdaVol over traditional methods, suggesting its wide applicability in non-convex optimization. However, while the choice of the parameter $\lambda$ is guided by the inverse temperature, the selection of the parameter $\vartheta$ remains unclear. In the above setting ($\vartheta = 1$), the value of $f_{\lambda,\vartheta}$ is almost equal to $\lambda$ when distant from the global optimum, and the choice of $\vartheta$ does not significantly impact the performance.
This is due to the exponential activation of $f_{\lambda, \vartheta}$, which leads to a nearly instantaneous shift in $a$.
For a objective function which has small value and is potentially sensitive to $\vartheta$, the choice of $\vartheta$ should be discussed separately, but this is beyond the scope of our work.

\section{Conclusion and Future Work}

In this paper, we have introduced a noble algorithm for global non-convex optimization (AdaVol), and provided its convergence analysis. Our numerical experiments shows that the AdaVol outperforms traditional Langevin Dynamics based algorithms, and the results is consistent with our theory. 

However, the theoretical analysis is still incomplete. Specifically, revealing the interplay between $\gamma_k$ and $\delta_k$ presents a significant challenge and is a subject for future work. In addition, our choice of the diffusion coefficient is one example, and therefore, the pursuit of the optimal selection and its criteria constitutes the ultimate goal of our framework

For practical applications in machine learning problems, we need to update our methods to the setting where we do not have a full access to the gradient information of the objective function, since the calculation of exact gradient is computationally too expensive\cite{Taiji,Welling}. Applying our method to the loss function of neural networks also presents an avenue for future work.

{
\small
\bibliography{bibliography}
}

\appendix

\section{Preparation for the Proof}

\begin{lemma}
\label{A.1}
Suppose $\nu$ satisfies LSI with constant $\alpha > 0$ and $F$ is $L$-smooth. For $X\sim \nu$,
\[
\mathbb{E}[\|\nabla F(X)\|^2] \le \frac{Ln}{\beta}
\]
holds.
\end{lemma}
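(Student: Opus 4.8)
The plan is to exploit the explicit Gibbs form of the stationary distribution, $\nu(x)\propto\exp(-\beta F(x))$, together with an integration by parts, reducing the second moment of the gradient to an expectation of the Laplacian of $F$, which the $L$-smoothness hypothesis then bounds directly.

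First I would record the elementary identity obtained by differentiating $\log\nu(x)=-\beta F(x)+\mathrm{const}$, namely
\[
\nabla\nu(x)=-\beta\,\nabla F(x)\,\nu(x),\qquad\text{equivalently}\qquad \nabla F(x)\,\nu(x)=-\frac{1}{\beta}\nabla\nu(x).
\]
Substituting the second form into the quantity of interest gives
\[
\mathbb{E}[\|\nabla F(X)\|^2]=\int_{\mathbb{R}^n}\nabla F(x)\cdot\big(\nabla F(x)\,\nu(x)\big)\,dx=-\frac{1}{\beta}\int_{\mathbb{R}^n}\nabla F(x)\cdot\nabla\nu(x)\,dx.
\]

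Next I would integrate by parts componentwise. Provided the boundary terms vanish, this converts the last integral into $-\int_{\mathbb{R}^n}(\nabla\cdot\nabla F)\,\nu\,dx=-\int_{\mathbb{R}^n}\mathrm{Tr}(\nabla^2 F)\,\nu\,dx$, so that
\[
\mathbb{E}[\|\nabla F(X)\|^2]=\frac{1}{\beta}\int_{\mathbb{R}^n}\mathrm{Tr}(\nabla^2 F(x))\,\nu(x)\,dx=\frac{1}{\beta}\,\mathbb{E}[\mathrm{Tr}(\nabla^2 F(X))].
\]
The $L$-smoothness assumption $\nabla^2 F(x)\preceq LI_n$ gives the pointwise bound $\mathrm{Tr}(\nabla^2 F(x))\le Ln$, and since $\nu$ is a probability density the claimed inequality $\mathbb{E}[\|\nabla F(X)\|^2]\le Ln/\beta$ follows immediately. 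Notice that the essential structure here is only the Gibbs form of $\nu$ and the upper curvature bound; the LSI is not used in the algebra but rather in the analytic justification below.

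The main obstacle is justifying that the boundary terms in the integration by parts genuinely vanish, i.e. that $\partial_i F(x)\,\nu(x)\to 0$ as $\|x\|\to\infty$ and that the relevant integrands are absolutely integrable. This is exactly where the standing hypotheses do their work: $L$-smoothness forces $\|\nabla F(x)\|$ to grow at most linearly in $\|x\|$, while the LSI implies Gaussian concentration for $\nu$ and hence sub-Gaussian tail decay of the density, so the exponential decay of $\nu$ dominates the at-most-polynomial growth of $\nabla F$ and $\nabla^2 F$. I would make this rigorous by a standard truncation argument---carrying out the integration by parts against a smooth compactly supported cutoff and letting the cutoff exhaust $\mathbb{R}^n$, with the tail estimates above ensuring the limit passes through---rather than by grinding through the decay bounds explicitly.
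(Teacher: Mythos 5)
Your proof is correct and follows essentially the same route as the paper: both reduce $\mathbb{E}[\|\nabla F(X)\|^2]$ via the Gibbs identity $\nabla\nu = -\beta\,\nabla F\,\nu$ and integration by parts to $\frac{1}{\beta}\mathbb{E}[\Delta F(X)]$, then bound $\Delta F \le nL$ using $\nabla^2 F \preceq LI_n$. Your treatment is in fact more careful than the paper's (which states the integration-by-parts identity with some notational sloppiness and no justification of vanishing boundary terms), since you explicitly address integrability and tail decay via the LSI and a cutoff argument.
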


\begin{proof}
The proof of this lemma is found in \cite{Taiji}. By integration by parts, we obtain
\[
\mathbb{E}[\|\beta(\nabla F(X))\|^2] = \mathbb{E}[\|\Delta(\beta F(X))\|].
\]
Since $\nabla^2F(x)\preceq LI_n$, we have $\Delta F(x)\le nL$. Therefore,
\[
\mathbb{E}[\|\nabla F(X)\|^2] = \frac{1}{\beta}\mathbb{E}[\|\Delta F(X)\|] \le \frac{Ln}{\beta}
\]
holds.
\end{proof}

\begin{lemma}
\label{A.2}
Suppose $\nu$ satisfies LSI with constant $\alpha > 0$ and $F$ is $L$-smooth. For any distribution $\rho$ on $\mathbb{R}^n$,
\[
\mathbb{E}[a(X)^2\|\nabla F(X)\|^2]\le 2(\lambda + 1)^2L
\left(\frac{2L}{\alpha}H_{\nu}(\rho) + \frac{n}{\beta}\right),\quad
X\sim \rho
\]
holds.
\end{lemma}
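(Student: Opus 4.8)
The plan is to first reduce the weighted second moment to an unweighted one by bounding the diffusion coefficient uniformly, and then to transfer the estimate of Lemma~\ref{A.1}, which holds under $\nu$, to the arbitrary distribution $\rho$ by a transport argument. I would begin by noting that $a(x) = \bar{a}(x) I_n$ is a scalar multiple of the identity, with scalar factor $\bar{a}(x) = f_{\lambda,\vartheta}((F(x)-c)^+) + 1$. Since $0 \le 1 - e^{-\vartheta y^2} \le 1$ for every $y \in \mathbb{R}$, we have $0 \le f_{\lambda,\vartheta} \le \lambda$ and hence $1 \le \bar{a}(x) \le \lambda + 1$ pointwise. This gives $a(x)^2 \le (\lambda+1)^2$, so that
\[
\mathbb{E}[a(X)^2\|\nabla F(X)\|^2] \le (\lambda+1)^2\, \mathbb{E}_\rho[\|\nabla F(X)\|^2],
\]
and it remains to establish $\mathbb{E}_\rho[\|\nabla F\|^2] \le 2L(\frac{2L}{\alpha}H_\nu(\rho) + \frac{n}{\beta})$.

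For this I would use that $L$-smoothness makes $\nabla F$ an $L$-Lipschitz map. Let $\pi$ be a $W_2$-optimal coupling of $\rho$ and $\nu$; for $(X,Y)\sim\pi$, the elementary inequality $\|u\|^2 \le 2\|u-v\|^2 + 2\|v\|^2$ together with Lipschitzness gives $\|\nabla F(X)\|^2 \le 2L^2\|X-Y\|^2 + 2\|\nabla F(Y)\|^2$. Taking expectations under $\pi$ and using that its marginals are $\rho$ and $\nu$,
\[
\mathbb{E}_\rho[\|\nabla F\|^2] \le 2L^2\, W_2^2(\rho,\nu) + 2\,\mathbb{E}_\nu[\|\nabla F\|^2].
\]
The second term is at most $2Ln/\beta$ by Lemma~\ref{A.1}. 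For the first, I would invoke the Otto--Villani theorem: the LSI with constant $\alpha$ implies Talagrand's transportation inequality $W_2^2(\rho,\nu) \le \frac{2}{\alpha}H_\nu(\rho)$ \cite{Bakry}. Substituting both estimates gives $\mathbb{E}_\rho[\|\nabla F\|^2] \le \frac{4L^2}{\alpha}H_\nu(\rho) + \frac{2Ln}{\beta} = 2L(\frac{2L}{\alpha}H_\nu(\rho) + \frac{n}{\beta})$, and multiplying through by $(\lambda+1)^2$ finishes the proof.

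The crux is the transfer from an estimate under the stationary measure $\nu$ to one under the arbitrary law $\rho$. The more naive route, writing $\nabla F$ in terms of the scores $\nabla\log(\rho/\nu)$ and integrating by parts, only controls $\mathbb{E}_\rho[\|\nabla F\|^2]$ by the relative Fisher information $J_\nu(\rho)$, which the LSI bounds from below rather than above, and so cannot produce the desired $H_\nu(\rho)$ on the right-hand side. The coupling step circumvents this by reducing the transfer to a bound on $W_2(\rho,\nu)$, which lets the LSI be used a second time, via its Talagrand consequence, to reach $H_\nu(\rho)$. I expect the only delicate points to be invoking Talagrand with the precise constant $2/\alpha$ and verifying the mild decay conditions that guarantee existence of the optimal coupling and finiteness of the expectations involved.
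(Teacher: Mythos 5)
Your proof is correct and follows essentially the same route as the paper's: bound $a(X)^2$ uniformly by $(\lambda+1)^2$, use a $W_2$-optimal coupling with the Lipschitzness of $\nabla F$, control $W_2^2(\rho,\nu)$ via Talagrand's inequality (implied by the LSI), and control $\mathbb{E}_\nu[\|\nabla F\|^2]$ via Lemma~\ref{A.1}. If anything, your write-up is slightly more careful than the paper's, which leaves the uniform bound on $a$ and the Otto--Villani step implicit (and contains a harmless typo, $2(\lambda+1)$ in place of $2(\lambda+1)^2$, in its intermediate display).
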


\begin{proof}
The proof of this lemma is based on \cite{Vempala}. Let $X_* \sim \nu$ and assume that $(X, X_*)$ is an optimal coupling which satisfies
\[
\mathbb{E}[\|X - X_*\|^2] = W_2(\rho, \nu)^2
\]
where $W_2$ is a Wasserstein distance between $\rho$ and $\nu$. Given that $\nabla F$ is L-Lipschitz, we obtain
\[
\|\nabla F(X)\|\le L\|X - X_*\| + \|\nabla F(X_*)\|.
\]
Therefore, by applying Talagrand's inequality and Lemma \hyperref[A.1]{A.1}, 
\begin{eqnarray*}
\mathbb{E}[a(X)^2\|\nabla F(X)\|^2] &\le& 2(\lambda + 1)^2 L^2 W_2(\rho, \nu)^2 + 2(\lambda + 1)\mathbb{E}[\|\nabla F(X_*)\|^2]\\
&\le& \frac{4(\lambda + 1)^2 L^2}{\alpha}H_{\nu}(\rho) + \frac{2(\lambda + 1)^2Ln}{\beta}
\end{eqnarray*}
holds.
\end{proof}

\begin{lemma}
\label{A.3}
if $k\ge 2$ and 
\[
t \le \frac{\delta_k}{(\lambda + 2)^2L},\quad \theta\le\beta^2
\]
then an inequality
\begin{eqnarray*}
\mathbb{E}\left[\left\langle
a(X_0^k)(\nabla F(X_t^k) - \nabla F(X_0^k)), \nabla\log{\frac{\rho_t(X_t^k)}{\nu(X_t^k)}}
\right\rangle\right]
&\le& \frac{4\beta(\lambda + 2)^3L^4t^2}{\alpha}H_{\nu}(\rho_k) + 4\delta_k L^2nt \\
&\,& + \frac{1}{4\beta}\mathbb{E}\left[
a(X_0^k)\left\|\nabla\log{\frac{\rho_t(X_t^k)}{\nu(X_t^k)}}\right\|^2\right]
\end{eqnarray*}
holds.
\end{lemma}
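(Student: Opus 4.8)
The plan is to dispatch the inner product with a single Young's inequality and then reduce everything to a one-step mean-square displacement of the interpolating diffusion. Writing the scalar $a_0 := f_{\lambda,\vartheta}((F(X_0^k)-c)^+)+1 \in [1,\lambda+1]$, set $\Delta := \nabla F(X_t^k)-\nabla F(X_0^k)$ and $g := \nabla\log\frac{\rho_t}{\nu}(X_t^k)$. Since $a_0$ is a nonnegative scalar, I would write $\langle a_0\Delta, g\rangle = \langle \sqrt{a_0}\,\Delta,\ \sqrt{a_0}\,g\rangle$ and apply $\langle u,v\rangle \le \beta\|u\|^2 + \frac{1}{4\beta}\|v\|^2$. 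Taking expectations, the $v$-term is exactly $\frac{1}{4\beta}\mathbb{E}[a_0\|g\|^2]$, the third term of the claim, so it remains to bound $\beta\,\mathbb{E}[a_0\|\Delta\|^2]$. By $L$-smoothness, $\|\Delta\| \le L\|X_t^k-X_0^k\|$, hence this is at most $\beta L^2\,\mathbb{E}[a_0\|X_t^k-X_0^k\|^2]$.

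Next I would expand the increment along the interpolating SDE. With $b_0 := a_0\nabla F(X_0^k)-\beta^{-1}\nabla\cdot a(X_0^k)$ we have $X_t^k-X_0^k = -t\,b_0 + \sqrt{2\beta^{-1}a_0}\,W_t$, where $W_t$ is independent of $X_0^k$. Conditioning on $X_0^k$, the cross term vanishes and $\mathbb{E}\|W_t\|^2 = nt$ gives
\[
\mathbb{E}[a_0\|X_t^k-X_0^k\|^2] = t^2\,\mathbb{E}[a_0\|b_0\|^2] + 2\beta^{-1}nt\,\mathbb{E}[a_0^2].
\]
The last summand equals $2\beta^{-1}nt\,\delta_k$, so after multiplying by $\beta L^2$ it contributes $2\delta_k L^2 nt$, which sits inside the target $4\delta_k L^2 nt$ with room to spare.

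The heart of the argument is the drift term $\beta L^2 t^2\,\mathbb{E}[a_0\|b_0\|^2]$. The structural observation I would exploit is that, because $a(x)=g(x)I_n$ is a scalar multiple of the identity, its divergence $\nabla\cdot a=\nabla g$ is collinear with $\nabla F$; concretely $b_0 = \left(a_0 - \beta^{-1}f'_{\lambda,\vartheta}((F(X_0^k)-c)^+)\right)\nabla F(X_0^k)$ is a scalar multiple of $\nabla F(X_0^k)$. Here the hypothesis $\vartheta\le\beta^2$ is exactly what controls the correction: since $\|f'_{\lambda,\vartheta}\|_\infty = \lambda\sqrt{2\vartheta/e} \le \lambda\beta\sqrt{2/e} \le \lambda\beta$, we get $\beta^{-1}|f'_{\lambda,\vartheta}| \le \lambda$, and as both $a_0$ and this correction are nonnegative the scalar factor has magnitude at most $\lambda+1 \le \lambda+2$. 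Thus $\mathbb{E}[a_0\|b_0\|^2] \le (\lambda+2)^2\,\mathbb{E}[a_0\|\nabla F(X_0^k)\|^2]$. Re-running the coupling/Talagrand argument of Lemma \ref{A.2} with a single factor $a_0\le\lambda+1$ yields $\mathbb{E}[a_0\|\nabla F\|^2]\le \frac{4(\lambda+1)L^2}{\alpha}H_\nu(\rho_k) + \frac{2(\lambda+1)Ln}{\beta}$, and the $H_\nu$ part then produces the claimed $\frac{4\beta(\lambda+2)^3 L^4 t^2}{\alpha}H_\nu(\rho_k)$ after bounding $(\lambda+2)^2(\lambda+1)\le(\lambda+2)^3$.

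Finally, the remaining $O(t^2)$ piece coming from the $n/\beta$ part of Lemma \ref{A.2} must be folded into the linear-in-$t$ error $4\delta_k L^2 nt$. I would do this by invoking the step-size restriction $t\le \delta_k/((\lambda+2)^2 L)$ to trade one factor of $t$, converting the surplus into a multiple of $\delta_k L^2 nt$ and using $\delta_k\ge 1$. I expect this last bookkeeping to be the main obstacle: the collinearity of $b_0$ with $\nabla F$ and the $\vartheta\le\beta^2$ bound are what keep the powers of $(\lambda+2)$ under control, but matching the exact constants simultaneously across the $H_\nu(\rho_k)$ coefficient, the $\delta_k$ discretization term, and the step-size threshold is delicate, and the precise numbers (the leading $4$ and the power of $(\lambda+2)$) hinge on exactly how many factors of $a_0$ are retained before Lemma \ref{A.2} is applied.
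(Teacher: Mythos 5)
Your proposal follows the paper's own proof essentially step for step: the same Young's inequality $\langle u,v\rangle\le\beta\|u\|^2+\frac{1}{4\beta}\|v\|^2$ applied after distributing the weight $a(X_0^k)$ as $\sqrt{a_0}\cdot\sqrt{a_0}$, the same reduction via $L$-smoothness to the one-step mean-square displacement, the same expansion of the interpolating SDE (whose noise part gives the $2\delta_kL^2nt$ contribution), control of the drift through the hypothesis $\vartheta\le\beta^2$, an appeal to the coupling/Talagrand bound of Lemma A.2, and the step-size hypothesis to fold the $t^2\cdot n/\beta$ surplus into the linear-in-$t$ term. Where you deviate, it is to your credit: you make explicit that $\nabla\cdot a=f_{\lambda,\vartheta}'\,\nabla F$ is collinear with $\nabla F$ with nonnegative coefficient (which is what justifies discarding the drift cross term), and you bound $\beta^{-1}\|f_{\lambda,\vartheta}'\|_\infty=\beta^{-1}\lambda\sqrt{2\vartheta/e}\le\lambda$ correctly. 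The paper instead drops the cross term without comment and asserts $\mathbb{E}[\|\nabla\cdot a(X_0^k)\|^2]\le\theta\,\mathbb{E}[\|\nabla F(X_0^k)\|^2]$, which is off by a factor of order $\lambda^2$ (pointwise one has $\|\nabla\cdot a\|^2\le\frac{2\lambda^2\vartheta}{e}\|\nabla F\|^2$); your version of this step is the correct one.

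The obstacle you flag at the end is real, but it is a defect of the lemma's printed constants rather than of your reasoning, and the paper's own proof does not genuinely overcome it. Quantitatively: carrying your (correct) factor $a_0\le\lambda+1$ through Lemma A.2, the drift's $n/\beta$ contribution is $2(\lambda+2)^2(\lambda+1)L^3nt^2\le 2(\lambda+1)\delta_kL^2nt$ under $t\le\delta_k/((\lambda+2)^2L)$, and together with the noise term $2\delta_kL^2nt$ this yields $2(\lambda+2)\delta_kL^2nt$, which exceeds the claimed $4\delta_kL^2nt$ for every $\lambda>0$. The paper reaches the printed constant only by invoking Lemma A.2 in the form $\mathbb{E}[a\|\nabla F\|^2]\le\frac{4L^2}{\alpha}H_{\nu}(\rho_k)+\frac{2n}{\beta}$, i.e.\ silently discarding the $(\lambda+1)^2$ and $L$ factors that the lemma actually supplies (and its final absorption step additionally needs $L\ge1$, which is never assumed). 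So your plan, executed honestly, proves the lemma with $4\delta_kL^2nt$ replaced by $2(\lambda+2)\delta_kL^2nt$, or with the stated constant under the slightly stronger step-size restriction $t\le\delta_k/((\lambda+2)^3L)$; exact agreement with the printed constants is not attainable by your route, nor, on inspection, by the paper's own.
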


\begin{proof}
The proof of this lemma is based on \cite{Vempala}. 
Since $\displaystyle xy \le \beta x^2 + \frac{1}{4\beta}y^2$ and $\nabla F$ is L-Lipschitz we obtain
\begin{eqnarray*}
\mathbb{E}\left[\left\langle
a(X_0^k)(\nabla F(X_t^k) - \nabla F(X_0^k)), \nabla\log{\frac{\rho_t(X_t^k)}{\nu(X_t^k)}}
\right\rangle\right]
&\le& \beta\mathbb{E}[a(X_0^k)\|\nabla F(X_t^k) - \nabla F(X_0^k)\|^2]\\
&\,& + \frac{1}{4\beta}\mathbb{E}\left[
a(X_0^k)\left\|\nabla\log{\frac{\rho_t(X_t^k)}{\nu(X_t^k)}}\right\|^2\right]\\
&\le& \beta(\lambda + 1)L^2\mathbb{E}[\|X_t^k - X_0^k\|^2]\\
&\,& + \frac{1}{4\beta}\mathbb{E}\left[
a(X_0^k)\left\|\nabla\log{\frac{\rho_t(X_t^k)}{\nu(X_t^k)}}\right\|^2\right].
\end{eqnarray*}
We have the expression
\[
X_t^k = X_0^k - t(a(X_0^k)\nabla F(X_0^k) - \beta^{-1}\nabla\cdot a(X_0^k))
+ \sqrt{2\beta^{-1}ta(X_0^k)}Z_0,\quad Z_0\sim \mathcal{N}(0, I_n).
\]
Given that 
\begin{eqnarray*}
\mathbb{E}[\|\nabla\cdot a(X_0^k)\|^2] \le \theta\mathbb{E}[\|\nabla F(X_0^k)\|^2],\quad
t \le \frac{\delta_k}{(\lambda + 2)^2L},\quad\theta\le\beta^2
\end{eqnarray*}
and applying Lemma \hyperlink{A.2}{A.2},
\begin{eqnarray*}
\mathbb{E}[a(X_0^k)\|X_t^k - X_0^k\|^2] &=& \mathbb{E}\left[\left\|ta(X_0^k)(a(X_0^k)\nabla F(X_0^k) - \beta^{-1}\nabla\cdot a(X_0^k))
+ \sqrt{2\beta^{-1}a(X_0^k)}Z_0\right\|^2\right]\\
&\le& t^2\mathbb{E}[a(X_0^k)^3\|\nabla F(X_0^k)\|^2] + \frac{t^2}{\beta^2}\mathbb{E}[a(X_0^k)\|\nabla\cdot a(X_0^k)\|^2]
+ \frac{2\delta_kLnt}{\beta}\\
&\le& t^2\left((\lambda + 1)^2 + \frac{\theta}{\beta^2}\right)
\left(\frac{4L^2}{\alpha}H_{\nu}(\rho_k) + \frac{2n}{\beta}\right)
+ \frac{2\delta_kLnt}{\beta}\\
&\le& \frac{4(\lambda + 2)^2L^2t^2}{\alpha}H_{\nu}(\rho_k) + \frac{4\delta_kLnt}{\beta}.
\end{eqnarray*}
Therefore, we obtain
\begin{eqnarray*}
\mathbb{E}\left[\left\langle
a(X_0^k)(\nabla F(X_t^k) - \nabla F(X_0^k)), \nabla\log{\frac{\rho_t(X_t^k)}{\nu(X_t^k)}}
\right\rangle\right]
&\le& \frac{4\beta(\lambda + 2)^3L^4t^2}{\alpha}H_{\nu}(\rho_k) + 4\delta_k L^2nt \\
&\,& + \frac{1}{4\beta}\mathbb{E}\left[
a(X_0^k)\left\|\nabla\log{\frac{\rho_t(X_t^k)}{\nu(X_t^k)}}\right\|^2\right]
\end{eqnarray*}
\end{proof}

\section{Main Proof}

\begin{theorem}
Suppose $\nu$ satisfies LSI with constant $\alpha > 0$ and $F$ is $L$-smooth. If $k\ge 2$ and 
\[
0< \eta \le \frac{\delta_k}{(\lambda + 2)^2L}
\vee \frac{\alpha\sqrt{\gamma_k}}{4\beta(\lambda+2)^{3/2} L^2},\quad \theta\le\beta^2
\]
then an inequality  
\begin{equation}
H_{\nu}(\rho_{k+1})\le e^{-\frac{\alpha\gamma_k\eta}{\beta}}H_{\nu}(\rho_k)
+ 8\delta_k\eta^2 nL^2.
\end{equation}
holds.
\end{theorem}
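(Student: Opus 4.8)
The plan is to adapt the one-step interpolation argument of \cite{Vempala} to the state-dependent (weighted) setting, tracking the relative entropy along the continuous interpolation $\rho_t^k$ of a single Euler--Maruyama step and closing a Gr\"onwall-type differential inequality on $[0,\eta]$. First I would write down the Fokker--Planck equation for the marginal law of $X_t^k$. Because the coefficients are frozen at $X_0^k$, conditioning on $X_0^k$ and then marginalizing gives $\partial_t\rho_t^k = -\nabla\cdot(\bar b\,\rho_t^k) + \beta^{-1}\Delta(\bar a\,\rho_t^k)$, where $\bar b(x)=\mathbb{E}[b(X_0^k)\mid X_t^k=x]$ and $\bar a(x)=\mathbb{E}[a(X_0^k)\mid X_t^k=x]$ are the effective conditional coefficients. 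It is worth computing the same quantity for the \emph{exact} reversible diffusion first: there the drift's divergence correction $\beta^{-1}\nabla\cdot a$ cancels exactly against a term generated by the diffusion, leaving the clean dissipation $-\beta^{-1}\mathbb{E}[a(X_t)\|\nabla\log(\rho_t/\nu)(X_t)\|^2]$. This identifies precisely which terms must survive and which must be controlled as discretization error.

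Next I would differentiate $H_\nu(\rho_t^k)$ in time, substitute the Fokker--Planck equation, and integrate by parts. Writing $u_t=\log(\rho_t^k/\nu)$ and using $\nabla\log\nu=-\beta\nabla F$, the computation separates into the dissipation term $-\beta^{-1}\mathbb{E}[a(X_0^k)\|\nabla u_t(X_t^k)\|^2]$ (the weighted relative Fisher information, where the tower property converts $\bar a\,\|\nabla u_t\|^2$ back to $a(X_0^k)\|\nabla u_t\|^2$) together with error terms measuring the mismatch between frozen and exact coefficients. The leading error is exactly $E_1=\mathbb{E}[a(X_0^k)\langle\nabla F(X_t^k)-\nabla F(X_0^k),\nabla u_t(X_t^k)\rangle]$, which Lemma~\ref{A.3} bounds, under $\eta\le\delta_k/((\lambda+2)^2L)$ and $\vartheta\le\beta^2$, by $\tfrac{4\beta(\lambda+2)^3L^4t^2}{\alpha}H_\nu(\rho_k)+4\delta_kL^2nt$ plus a fraction $\tfrac{1}{4\beta}\mathbb{E}[a(X_0^k)\|\nabla u_t(X_t^k)\|^2]$ that is reabsorbed into the dissipation.

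I would then convert the surviving dissipation into entropy decay. Keeping a portion in weighted-Fisher form to absorb the remaining error terms via Young's inequality, the rest is lower-bounded using the defining property of $\gamma_k$ (the weighted Fisher information dominates $\gamma_k J_\nu(\rho_t^k)$) and the LSI ($J_\nu(\rho_t^k)\ge 2\alpha H_\nu(\rho_t^k)$), producing a differential inequality of the form $\tfrac{d}{dt}H_\nu(\rho_t^k)\le-\tfrac{\alpha\gamma_k}{\beta}H_\nu(\rho_t^k)+\tfrac{4\beta(\lambda+2)^3L^4t^2}{\alpha}H_\nu(\rho_k)+4\delta_kL^2nt$, in which the $H_\nu(\rho_k)$ on the right is the fixed start-of-step value, hence constant on $[0,\eta]$ (this is why Lemmas~\ref{A.2} and~\ref{A.3} are phrased at $\rho_k$ rather than $\rho_t^k$). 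Integrating with the factor $e^{\alpha\gamma_k t/\beta}$, bounding $e^{-\alpha\gamma_k(\eta-t)/\beta}\le 1$ in the error integrals, and using the step-size bound $\eta\le\tfrac{\alpha\sqrt{\gamma_k}}{4\beta(\lambda+2)^{3/2}L^2}$ to fold the $H_\nu(\rho_k)$-proportional contribution back into the exponential contraction factor would recover the clean rate $e^{-\alpha\gamma_k\eta/\beta}$ while the $\delta_k$-proportional term integrates to $8\delta_k\eta^2nL^2$.

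The main obstacle I anticipate is precisely the state dependence of the diffusion coefficient. Beyond the clean $E_1$ handled by Lemma~\ref{A.3}, the frozen divergence correction $\beta^{-1}\nabla\cdot a(X_0^k)$ generates additional error terms of the form $\beta^{-1}\mathbb{E}[(\mathbb{E}[\nabla\cdot a(X_0^k)\mid X_t^k]-\nabla_x\mathbb{E}[a(X_0^k)\mid X_t^k])\cdot\nabla u_t(X_t^k)]$, i.e. the gap between the expectation of the gradient and the gradient of the conditional expectation. These vanish identically for the exact reversible diffusion (by the cancellation noted above) but not after freezing, and they have no counterpart in the constant-coefficient analysis of \cite{Vempala}. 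Controlling them requires the divergence bound enforced through $\vartheta\le\beta^2$ together with a Young split against the leftover portion of the weighted-Fisher dissipation; making this rigorous, and doing so without an explicit handle on the constants $\gamma_k$ and $\delta_k$, is the delicate part of the argument.
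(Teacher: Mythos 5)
Your proposal follows essentially the same route as the paper's proof: the Vempala-style one-step interpolation with coefficients frozen at $X_0^k$, the Fokker--Planck equation for the marginal $\rho_t^k$, differentiation of $H_\nu(\rho_t^k)$ with the gradient-mismatch error absorbed via Lemma~\ref{A.3}, the $\gamma_k$-weighted Fisher/LSI lower bound on the surviving dissipation, and a Gr\"onwall integration in which the $H_\nu(\rho_k)$-proportional error is folded into the contraction factor using the step-size bound (the paper keeps the intermediate rate $\tfrac{3\alpha\gamma_k}{2\beta}$ precisely so that this folding lands on $e^{-\alpha\gamma_k\eta/\beta}$, a slack your sketch should retain). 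If anything, you are more careful than the paper itself: the divergence-mismatch term $\beta^{-1}\mathbb{E}\left[\left(\mathbb{E}[\nabla\cdot a(X_0^k)\mid X_t^k]-\nabla\,\mathbb{E}[a(X_0^k)\mid X_t^k]\right)\cdot\nabla\log\tfrac{\rho_t^k}{\nu}(X_t^k)\right]$ that you flag as the delicate part is silently dropped in the paper's own Fokker--Planck computation, whose displayed equations omit the frozen $\beta^{-1}\nabla\cdot a(X_0^k)$ drift and commute spatial derivatives with conditional expectations, so your plan to control it via $\vartheta\le\beta^2$ and a Young split is a genuine refinement rather than an unnecessary complication.
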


\begin{proof}
The framework of this proof is taken from \cite{Vempala}.
Let $\rho_{0t}^k(x_0,x_t)$ be the joint density of $(X_0^k, X_t^k)$. The Fokker-Planck equation for the conditional density $\rho_{t\vert 0}^k(x_t\vert x_0)$ is
\begin{equation}
\frac{\partial}{\partial t}\rho_{t\vert 0}^k(x_t\vert x_0) = 
-\nabla\cdot(a(x_0)\nabla F(x_0)\rho_{t\vert 0}^k(x_t\vert x_0))
+ \beta^{-1}\nabla\cdot(a(x_0)\nabla\rho_{t\vert 0}^k(x_t\vert x_0)).
\end{equation}
From this equation, we obtain the Fokker-Planck equation:
\begin{eqnarray}
\frac{\partial}{\partial t}\rho_{t}^k(x) &=& \int_{\mathbb{R}^n}\frac{\partial \rho_{t\vert 0}^k(x_t\vert x_0)}{\partial t}\rho_0^k(x_0)\,dx_0\nonumber\\ 
&=& \nabla\cdot\left(\mathbb{E}[a(x_0)\nabla F(x_0)\vert X_t=x]\rho_t^k(x)\right)
+ \beta^{-1}\nabla\cdot\left(\mathbb{E}[a(X_0)\vert X_t=x]\nabla\rho_t^k(x)\right).
\end{eqnarray}
Therefore, the time derivative of KL divergence along the diffusion is given by:
\begin{eqnarray}
\frac{d}{dt}H_{\nu}(\rho_t^k) &=& \int_{\mathbb{R}^n}\frac{\partial\rho_t^k(x)}{\partial t}\log{\frac{\rho_t^k(x)}{\nu(x)}}\,dx\nonumber \\
&=& \int_{\mathbb{R}^n}\left(\nabla\cdot\left(\rho_t^k(x)A(x_0,x)
\right)\right)\log{\frac{\rho_t^k(x)}{\nu(x)}}\,dx\nonumber\\
&=& -\int_{\mathbb{R}^n}\rho_t^k(x)\left\langle
A(x), \nabla\log{\frac{\rho_t^k(x)}{\nu(x)}}
\right\rangle\,dx
\end{eqnarray}
where
\begin{eqnarray}
A(x) &=& \mathbb{E}[a(X_0^k)\vert X_t^k=x] + \beta^{-1}\mathbb{E}[a(X_0^k)\vert X_t^k=x]\frac{\nabla\rho_t^k(x)}{\rho_t^k(x)}
\nonumber\\
&=& \mathbb{E}[a(X_0^k)(\nabla F(X_0^k) - \nabla F(X_t))\vert X_t^k=x] 
+ \beta^{-1}\mathbb{E}[a(X_0^k)\vert X_t^k=x]\nabla\log{\frac{\rho_t^k(x)}{\nu(x)}}.\nonumber
\end{eqnarray}

From Lemma \hyperref[A.3]{A.3} and the log-Sovolev inequality, we obtain
\begin{eqnarray*}
\frac{d}{dt}H_{\nu}(\rho_t^k) &\le&
-\frac{3}{4\beta}\mathbb{E}\left[
a(X_0^k)\left\|\nabla\log{\frac{\rho_t(X_t^k)}{\nu(X_t^k)}}\right\|^2\right]
+\frac{4\beta(\lambda + 2)^3L^4t^2}{\alpha}H_{\nu}(\rho_k) + 4\delta_k L^2nt\\
&\le& -\frac{3\alpha\gamma_k}{2\beta}H_{\nu}(\rho_t^k)
+\frac{4\beta(\lambda + 2)^3L^4t^2}{\alpha}H_{\nu}(\rho_k) + 4\delta_k L^2nt
\end{eqnarray*}
Consequently,
\begin{eqnarray}
e^{\frac{3\alpha\gamma_k\eta}{2\beta}}H_{\nu}(\rho_{k+1})
&\le& H_{\nu}(\rho_k) + \int_{0}^{\eta}e^{\frac{3\alpha\gamma_k\xi}{2\beta}}
\left(\frac{4\beta(\lambda + 2)^3\eta^2L^4}{\alpha}H_{\nu}(\rho_k) + 4\delta_k\eta L^2n\right)d\xi\nonumber\\
&\le& \left(1 + \frac{8\beta(\lambda + 2)^3\eta^3L^4}{\alpha}\right)H_{\nu}(\rho_k)
+ 8e^{\frac{3\alpha\gamma_k\eta}{2\beta}}\delta_k\eta^2 L^2n.
\end{eqnarray}
Since $\displaystyle \eta\le \frac{\alpha\sqrt{\gamma_k}}{4\beta(\lambda+2)^{3/2} L^2}$, we have
\begin{equation}
H_{\nu}(\rho_{k+1}) \le e^{-\frac{\alpha\gamma_k\eta}{\beta}}H_{\nu}(\rho_{k})
+ 8\delta_k\eta^2 nL^2.
\end{equation}
\end{proof}

\end{document}